\title{\textsc{On the size of lattice simplices\\ with a single interior lattice point}}
\author{Gennadiy Averkov\footnote{Institute for Mathematical Optimization, Faculty of Mathematics, University of Magdeburg, Universit\"atsplatz 2,  39106 Magdeburg, Germany. Email: averkov@math.uni-magdeburg.de}}
\newcommand{\rmcmd}[1]{\mathop{\mathrm{#1}}\nolimits}
\newcommand{\conv}{\rmcmd{conv}}
\newcommand{\aff}{\rmcmd{aff}}
\newcommand{\real}{\mathbb{R}}
\newcommand{\natur}{\mathbb{N}}
\newcommand{\integer}{\mathbb{Z}}
\newcommand{\Aff}{\rmcmd{Aff}}
\newcommand{\card}[1]{\left|#1\right|}
\newcommand{\term}[1]{\emph{#1}}
\newcommand{\cT}{\mathcal{T}}
\newcommand{\setcond}[2]{\left\{#1 \, : \, #2 \right\}}
\newcommand{\setcondsep}{:}
\newcommand{\intr}{\rmcmd{int}}
\newcommand{\textand}{\text{and}}
\newcommand{\textor}{\text{or}}
\newcommand{\textforall}{\text{for all}}
\newcommand{\bL}{\mathbb{L}}
\newcommand{\bM}{\mathbb{M}}
\newcommand{\bV}{\mathbb{V}}
\newcommand{\vol}{\rmcmd{vol}}
\newcommand{\header}[1]{\textup{(#1)}}
\newcommand{\cL}{\mathcal{L}} 
\newenvironment{figtabular}[1]{
	\begin{figure}[htb]
		\begin{center}
			\begin{tabular}{#1}
}{
			\end{tabular}

		\end{center}

	\end{figure}
}
\newtheorem{nn}{}[section]
\newtheorem{theorem}[nn]{Theorem}
\newtheorem{lemma}[nn]{Lemma}
\newtheorem{remark}[nn]{Remark}
\newtheorem{example}[nn]{Example}
\theoremstyle{definition}
\newtheorem*{acknowledgements*}{Acknowledgements}
\numberwithin{equation}{section}
\begin{document}

\maketitle

\begin{abstract}
	Let $\cT^d$ be the set of all $d$-dimensional simplices $T$  in $\real^d$ with integer vertices and a single integer point in the interior of $T$. It follows from a result of Hensley that $\cT^d$ is finite up to affine transformations that preserve $\integer^d$. It is known that, when $d$ grows, the maximum volume of the simplices $T \in \cT^d$ becomes extremely large. We improve and refine bounds on the size of $T \in \cT^d$ (where by the size we mean the volume or the number of lattice points). It is shown that each $T \in \cT^d$ can be decomposed into an ascending chain of faces $G_1 \subseteq \cdots \subseteq G_d=T$ such that, for every $i \in \{1,\ldots,d\}$, $G_i$ is $i$-dimensional and the size of $G_i$ is bounded from above in terms of $i$ and $d$. The bound on the size of $G_i$ is double exponential in $i$. The presented upper bounds are asymptotically tight on the log-log scale.
\end{abstract}

\newtheoremstyle{itsemicolon}{}{}{\mdseries\rmfamily}{}{\itshape}{:}{ }{}
\newtheoremstyle{itdot}{}{}{\mdseries\rmfamily}{}{\itshape}{:}{ }{}
\theoremstyle{itdot}
\newtheorem*{msc*}{2010 Mathematics Subject Classification} 

\begin{msc*}
	Primary 52B20;  Secondary 14J45, 90C11
\end{msc*}

\newtheorem*{keywords*}{Keywords}

\begin{keywords*}
	barycentric coordinates; inequality; lattice-point enumerator; lattice polytope; Minkowski's fundamental theorem; simplex; volume
\end{keywords*}

\section{Introduction}

For standard background from the \term{geometry of numbers} and \term{convex geometry} we refer to \cite{MR893813,MR1216521,MR1940576,MR2335496}.  The cardinality of a set $X$ is denoted by $\card{X}$.  Throughout the paper we fix a $d$-dimensional vector space $\bV$ over $\real$ with $d \in \natur$ and a lattice $\bL$ of rank $d$ in $\bV$.   In most of the cases it will be convenient to use coordinate-free notation. However, when we use analytic expressions with matrices or the notion of volume, we fix coordinates. In such cases $\bV$ is identified with $\real^d$, the set of columns of $d$ real numbers. If $\bM$ is a lattice of rank $i \in \{1,\ldots,d\}$, then the \term{determinant} of $\bM$ (denoted by $\det \bM$) is the $i$-dimensional volume of any Dirichlet cell of $\bM$. A subset $P$ of $\bV$ is said to be a \term{lattice polytope} (with respect to the lattice $\bL$) if $P$ is the convex hull of finitely many points of $\bL$. Let $P$ be an $i$-dimensional lattice polytope in $\bV$, where $i \in \{1,\ldots,d\}$. Then by $\vol(P)$ we denote the $i$-dimensional \term{volume} of $P$. We also use the $i$-dimensional \term{normalized volume} of $P$ defined by 
	\[ 
		\vol_{\bL}(P):= \frac{\vol(P)}{\det \bM},
	\]	
	where $\bM := \bL \cap X$ and $X$ is the linear hull of vectors $x-y$ with $x,y \in P$. We remark that $\vol_{\bL}(P)$ does not depend on the choice of coordinates in $\bV$.

Let $\cT^d$ denote the family of all $d$-dimensional lattice simplices in $\bV$ with precisely one interior lattice point. It was shown by Hensley \cite{MR688412} that the volume of the elements of $\cT^d$ is bounded by a constant depending only on $d$. Continuing the research initiated in \cite{MR688412}, we present bounds on the size of lattice simplices with precisely one interior lattice point. 

Note that the family $\cT^d$ appears naturally in \term{algebraic geometry}; see \cite{MR1166957,arxiv:math/0001109,MR2549542}.
Furthermore, as seen from \cite{arXiv:1010.1077}, $\cT^d$ can be used in the study of the class $\cL^d$ of inclusion-maximal $d$-dimensional lattice polytopes without interior lattice points. The elements of $\cL^d$ are important for the \term{cutting-plane theory} in integer and mixed integer optimization (see \cite{arXiv:1010.1077,DelPiaWeismantel10} and the references therein).

We prove the following theorem, which is our main tool.

\begin{theorem} \label{bary:ineq:alt}
	Let $T \in \cT^d$. Let $\beta_i$ with $i \in \{0,\ldots,d\}$ be the barycentric coordinates of the unique interior lattice point of $T$. Then, for every partition $(I,J)$ of $\{0,\ldots,d\}$, one has 
	\begin{equation} \label{sum:prod:ineq:1}
		\sum_{i \in I} \beta_i \ge \prod_{j \in J} \beta_j.
	\end{equation}
\end{theorem}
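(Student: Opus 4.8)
The plan is to prove the contrapositive: if \eqref{sum:prod:ineq:1} fails for some partition $(I,J)$, then $T$ has a second lattice point in its interior, contradicting $T\in\cT^d$. Write $v_0,\dots,v_d$ for the vertices of $T$, so that $z=\sum_{i=0}^d\beta_i v_i$ with $\beta_i>0$ and $\sum_i\beta_i=1$. We may assume both $I$ and $J$ are nonempty: if $J=\emptyset$ the right-hand side of \eqref{sum:prod:ineq:1} is the empty product $1=\sum_i\beta_i$, while $I=\emptyset$ cannot occur, the left-hand side being $0$ and the right-hand side a product of positive reals. So suppose $\prod_{j\in J}\beta_j>\sum_{i\in I}\beta_i=:a$.

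The engine is the observation that, since $v_0,\dots,v_d,z$ all lie in $\bL$, every point $y=\sum_i m_i v_i+m z$ with integers $m_i,m$ and $\sum_i m_i+m=1$ again lies in $\bL$, and its barycentric coordinate at $v_i$ equals $m_i+m\beta_i$. Fix a reference index $i_0\in I$ and, for a positive integer $k$, take $m_j=-\lfloor k\beta_j\rfloor$ for $j\in J$, $m_i=0$ for $i\in I\setminus\{i_0\}$, $m=k$, and let $m_{i_0}$ absorb the deficit so that the coordinates sum to $1$. The resulting lattice point $y_k$ has barycentric coordinates
\[
  \{k\beta_j\}\ (j\in J),\qquad k\beta_i\ (i\in I\setminus\{i_0\}),\qquad 1-\sum_{j\in J}\{k\beta_j\}-k\!\!\sum_{i\in I\setminus\{i_0\}}\!\!\beta_i\ \ (\text{at }i_0),
\]
where $\{\,\cdot\,\}$ denotes fractional part. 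Hence $y_k\in\intr T$ precisely when $k\beta_j\notin\integer$ for all $j\in J$ and
\[
  \sum_{j\in J}\{k\beta_j\}+k\sum_{i\in I\setminus\{i_0\}}\beta_i<1,
\]
and, since $k\beta_i\neq\beta_i$ for $k\ge 2$, the only way to have $y_k=z$ is that $|I|=1$ and $(k-1)\beta_j\in\integer$ for every $j\in J$.

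It remains to exhibit one multiplier $k\ge 2$ satisfying these conditions. Choosing $i_0$ with $\beta_{i_0}$ maximal in $I$ minimises the harmful term $\sum_{i\in I\setminus\{i_0\}}\beta_i=a-\beta_{i_0}$ (it is $0$ when $|I|=1$), and one then searches among $k=2,3,\dots$ for a value for which the residues $k\beta_j$, $j\in J$, lie simultaneously just above integers, so that $\sum_{j\in J}\{k\beta_j\}$ is small; writing the $\beta_i$ over a common denominator turns this into a congruence problem solvable by the Chinese Remainder Theorem together with a pigeonhole argument. The role of the hypothesis $\prod_{j\in J}\beta_j>a$ is quantitative: it supplies exactly the slack needed for the located $k$ to fall below the threshold in the displayed inequality, while keeping $y_k\neq z$; when $|I|=1$ the identity $\{k\beta_{i_0}\}=\bigl\{-\sum_{j\in J}\{k\beta_j\}\bigr\}$ shows that the condition is equivalent to $\sum_{i=0}^d\{k\beta_i\}=1$ with all $k\beta_i$ non-integral.

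The main obstacle is this last quantitative step, because \eqref{sum:prod:ineq:1} is sharp: equality is attained by the simplex whose barycentric data are the reciprocals $\tfrac12,\tfrac13,\tfrac17,\tfrac1{43},\dots$ of Sylvester's sequence, for which no admissible $k$ exists. One must therefore extract from the \emph{uniqueness} of the interior lattice point a clean arithmetic inequality on the residues $k\beta_i$ and show that a failure of \eqref{sum:prod:ineq:1} forces its violation. The partitions with $|I|\ge 2$ are somewhat easier: there $y_k\neq z$ is automatic, the AM–GM bound $\prod_{j\in J}\beta_j\le\bigl(\tfrac{1-a}{|J|}\bigr)^{|J|}$ already settles $a\ge\tfrac12$, and for $a<\tfrac12$ the extra term $k\sum_{i\in I\setminus\{i_0\}}\beta_i$ can be kept small with a small multiplier; the case $|I|=1$ — the one exhibiting equality — is the genuine heart of the argument.
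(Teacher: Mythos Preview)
Your contrapositive setup and the construction of the candidate second lattice point are sound and share the paper's spirit. But the proof has a real gap exactly where you flag it: you never produce the multiplier $k$. The sentence ``solvable by the Chinese Remainder Theorem together with a pigeonhole argument'' is a hope, not an argument, and your last two paragraphs describe what still has to be done rather than doing it. As written, this is an outline that stops at the hard step.

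The difficulty is partly self-imposed. By forcing $m_j=-\lfloor k\beta_j\rfloor$ you leave yourself a \emph{single} free integer $k$ and must make all the fractional parts $\{k\beta_j\}$ small simultaneously; one-parameter simultaneous approximation does not deliver the precise threshold $\prod_{j\in J}\beta_j$ versus $\sum_{i\in I}\beta_i$. The paper instead lets the integers $m_j$ (for $j\in J$) and $m$ vary independently and invokes Minkowski's linear-forms theorem (Theorem~\ref{mink:cor}): the system
\[
\Bigl|\tfrac{1}{\beta_j}\,m_j - m\Bigr| < 1 \quad (j\in J), \qquad \Bigl|\,m - \sum_{j\in J} m_j\,\Bigr| < 1
\]
has coefficient matrix whose determinant is exactly $\bigl(\sum_{i\in I}\beta_i\bigr)\big/\bigl(\prod_{j\in J}\beta_j\bigr)$. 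A failure of \eqref{sum:prod:ineq:1} is literally the statement that this determinant is $<1$, so Minkowski hands you a nonzero integer solution at once; integrality forces $m=\sum_{j\in J}m_j$ and $m\ne 0$, after a sign flip $m>0$, and then $q:=(m+1)p-\sum_{j\in J}m_j\,p_j$ is a lattice point in $\intr T$ distinct from $p$. No case split on $|I|$, no special handling of the Sylvester equality case, is needed. The missing idea in your attempt is precisely this use of the geometry of numbers in place of an ad hoc search for a single $k$.
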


In Theorem~\ref{bary:ineq:alt} without loss of generality one can assume $\beta_0 \ge \cdots \ge \beta_d > 0$. Then, using \eqref{sum:prod:ineq:1}, it is possible to determine lower bounds for $\beta_i$'s in terms of $i$ and $d$. As a consequence we obtain the following result on the maximum size of simplices from $\cT^d$. 

\begin{theorem} \label{balance:of:simplices}
	Let $d \in \natur$. Then the following statements hold.
	\begin{enumerate}[I.]
		\item \label{main:upper:bounds} For every $T \in \cT^d$ there exist faces $G_1 \subseteq \cdots \subseteq G_d = T$ of $T$ such that, for every $i \in \{1,\ldots,d\}$, $G_i$ is $i$-dimensional and satisfies
		\begin{align}
			\vol_{\bL}(G_i) & \le \frac{1}{i!} (d+1)^{2^i-1}, \label{vol:upper} \\
			\card{G_i \cap \bL} & \le i + (d+1)^{2^i-1}. \label{card:upper}
		\end{align}
		\item There exists $T' \in \cT^d$ and faces $G_1 \subseteq \cdots \subseteq G_d = T'$ of $T'$ such that, for every $i \in \{1,\ldots,d\}$, $G_i$ is $i$-dimensional and satisfies
		\begin{align}
			\vol_{\bL}(G_i) & \ge \frac{1}{i!} (2^{2^{i-1}}-1) , \label{vol:lower} \\
			\card{G_i \cap \bL} & \ge 2^{2^{i-2}}. \label{card:lower}
		\end{align}
	\end{enumerate}
\end{theorem}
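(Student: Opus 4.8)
The plan is to prove Part~\ref{main:upper:bounds} by combining Theorem~\ref{bary:ineq:alt} with a geometric ``peeling'' of $T$, and to prove the lower bounds by an explicit recursive construction. Fix $T=\conv\{v_0,\dots,v_d\}\in\cT^d$ with unique interior lattice point $z=\sum_{i=0}^{d}\beta_i v_i$, $\sum_i\beta_i=1$, reindexed so that $\beta_0\ge\cdots\ge\beta_d>0$. The first, purely analytic step extracts lower bounds for the $\beta_i$. Applying \eqref{sum:prod:ineq:1} to the partition $I=\{i,\dots,d\}$, $J=\{0,\dots,i-1\}$ and bounding the left-hand side by $(d+1-i)\beta_i$ (using $\beta_i\ge\beta_{i+1}\ge\cdots\ge\beta_d$) gives $(d+1-i)\beta_i\ge\prod_{j<i}\beta_j$. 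Setting $Q_i:=\prod_{j=0}^{i-1}\beta_j$ (so $Q_0=1$ and $Q_1=\beta_0\ge\tfrac1{d+1}$, as $\beta_0$ is the largest of $d+1$ numbers with sum $1$), this reads $Q_{i+1}=Q_i\beta_i\ge Q_i^2/(d+1)$, and induction yields $Q_i\ge(d+1)^{-(2^i-1)}$, hence also $\beta_i\ge(d+1)^{-2^i}$.

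The second step is geometric. Define the flag by discarding at each stage the remaining vertex carrying the smallest barycentric coordinate: $G_d:=T$ and $G_i:=\conv\{v_0,\dots,v_i\}$. Normalized volumes are multiplicative along a flag of faces, $k!\,\vol_{\bL}(G_k)=(k-1)!\,\vol_{\bL}(G_{k-1})\cdot h_k$, where $h_k\ge 1$ is the lattice height of $v_k$ over $\aff G_{k-1}$ computed in $\bL\cap\lin(G_k-G_k)$, so that $D:=d!\,\vol_{\bL}(T)=\bigl(i!\,\vol_{\bL}(G_i)\bigr)\,h_{i+1}\cdots h_d$. The aim is the bound $i!\,\vol_{\bL}(G_i)\le 1/Q_i$ (equivalently $h_{i+1}\cdots h_d\ge D\cdot Q_i$), which by the first step gives $\vol_{\bL}(G_i)\le\frac1{i!}(d+1)^{2^i-1}$, that is, \eqref{vol:upper}. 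To obtain it one uses that $z\in\bL$ lies at lattice distance at least $1$ from the hyperplane $\aff F_k$ of the facet $F_k$ of $T$ opposite $v_k$, so the lattice height of $v_k$ over $\aff F_k$ (taken in $\bL$) is at least $1/\beta_k$; comparing this with $h_k$, which is measured in the smaller lattice $\bL\cap\lin(G_k-G_k)$, and bookkeeping the resulting lattice indices converts the $\beta_k$-estimates into the required lower bound on $h_{i+1}\cdots h_d$. Finally \eqref{card:upper} follows from \eqref{vol:upper} by the $h^\ast$-vector inequality: for a $k$-dimensional lattice polytope $P$ one has $\card{P\cap\bL}-k-1=h^\ast_1\le\sum_j h^\ast_j=k!\,\vol_{\bL}(P)$, whence $\card{G_i\cap\bL}\le i+i!\,\vol_{\bL}(G_i)\le i+(d+1)^{2^i-1}$.

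For Part~II one exhibits a single simplex. Build lattice simplices $G_1\subseteq\cdots\subseteq G_d$ recursively: $G_1$ is a primitive lattice segment, and for $i\ge 2$ the simplex $G_i$ is a pyramid over $G_{i-1}$ whose apex is placed at lattice height $2^{2^{i-2}}+1$ over $\aff G_{i-1}$, chosen so that the edge joining the apex to a vertex of $G_{i-1}$ has full lattice length $2^{2^{i-2}}+1$. Then $i!\,\vol_{\bL}(G_i)=\prod_{m=0}^{i-2}\bigl(2^{2^m}+1\bigr)=2^{2^{i-1}}-1$, which is \eqref{vol:lower}, and that long edge carries $2^{2^{i-2}}+2\ge 2^{2^{i-2}}$ lattice points, which is \eqref{card:lower}. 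The one thing needing care is to choose the apices so that $T':=G_d$ has exactly one interior lattice point, i.e.\ $T'\in\cT^d$; this is a direct Zaks--Perles--Wills--type computation with the explicit coordinates of $T'$. Asymptotic tightness on the log-log scale is then immediate: for the constructed flag, $2^{2^{i-1}}-1\le i!\,\vol_{\bL}(G_i)\le(d+1)^{2^i-1}$ (and similarly for the lattice-point counts), so $\log\log\bigl(i!\,\vol_{\bL}(G_i)\bigr)$ is squeezed between $(i-1)\log 2+o(1)$ and $i\log 2+\log\log(d+1)$, and these agree to leading order as $i,d\to\infty$ (e.g.\ $i=d$).

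The hardest point is the geometric step in Part~\ref{main:upper:bounds}: turning the analytic lower bounds on the $\beta_i$ into bounds on \emph{face} volumes. The interior lattice point $z$ does not lie on the lower-dimensional faces $G_i$, so one cannot simply recurse ``$G_i\in\cT^i$''; instead each peeling height $h_k$ must be controlled by relating the intrinsic lattice of $\aff G_k$ to the ambient lattice restricted to the hyperplane spanned by the facet of $T$ opposite $v_k$, and the lattice indices arising in that comparison must be prevented from eroding the double-exponential bound. A secondary technical point is ensuring, in Part~II, that the recursively defined simplex with these prescribed lattice heights genuinely has a single interior lattice point.
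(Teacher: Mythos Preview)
Your analytic first step---deriving $\prod_{j<i}\beta_j\ge(d+1)^{-(2^i-1)}$ and $\beta_i\ge(d+1)^{-2^i}$ from \eqref{sum:prod:ineq:1}---is correct and matches the paper (Theorem~\ref{beta:bounds}). The choice of flag $G_i=\conv\{v_0,\dots,v_i\}$ is also the paper's, and your $h^\ast$-vector inequality for \eqref{card:upper} is equivalent to the paper's appeal to Blichfeldt.

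The gap is the geometric step in Part~I. You propose to convert the facet-height estimate $H_k\ge 1/\beta_k$ (height of $v_k$ over the opposite facet $F_k$, taken in $\bL$) into the required lower bound on $h_{i+1}\cdots h_d$ by ``bookkeeping lattice indices''. But $h_k$ and $H_k$ are heights over subspaces of \emph{different dimensions} ($G_{k-1}$ is $(k{-}1)$-dimensional, $F_k$ is $(d{-}1)$-dimensional), measured in different lattices; there is no inequality $h_k\ge cH_k$ with a controllable $c$. Worse, even granting $h_k\ge 1/\beta_k$ for $k>i$, you would only get $i!\,\vol_\bL(G_i)=D/\prod_{k>i}h_k\le D\prod_{k>i}\beta_k$, and to reach $1/Q_i$ from this you need $D\le 1/\prod_{j\ne i}\beta_j$---which is exactly (a case of) the face-volume bound you are trying to prove. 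So what you call bookkeeping is the whole argument, and it is missing. The paper proceeds differently: it intersects $T$ with the $i$-flat through $p$ parallel to $G_i$, obtaining an $i$-simplex $T_I$ homothetic to $G_i$ (ratio $\beta_J=\sum_{j\le i}\beta_j$) that contains $p$ in its relative interior with barycentric coordinates $\beta_j/\beta_J$; a direct Minkowski/parallelotope bound (Lemma~\ref{vol:bary:lem}) applied to $T_I$ then gives $\vol_\bL(G_i)\le\bigl(i!\prod_{n<i}\beta_n\bigr)^{-1}$ with no height comparison at all (Theorem~\ref{face:vol:versus:beta}).

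For Part~II your Fermat-height construction ($h_1=1$, $h_i=2^{2^{i-2}}+1$ for $i\ge 2$) does give $i!\,\vol_\bL(G_i)=2^{2^{i-1}}-1$, but you never verify $T'\in\cT^d$, and with $G_1$ primitive this is genuinely delicate: for instance $\conv\{o,e_1,3e_2\}$ has \emph{no} interior lattice point, so the apices cannot sit on coordinate axes, and your extra ``long edge'' requirement further restricts their placement. The paper sidesteps this by using the Sylvester sequence $t_1=2$, $t_n=1+\prod_{m<n}t_m$ and the axis-parallel simplex $T'=\conv\{o,t_1e_1,\dots,t_de_d\}$, for which Hensley observed that $(1,\dots,1)$ is the unique interior lattice point; then $G_i=\conv\{o,t_1e_1,\dots,t_ie_i\}$ satisfies $i!\,\vol_\bL(G_i)=t_{i+1}-1\ge 2^{2^{i-1}}-1$ and $\card{G_i\cap\bL}\ge t_i+1\ge 2^{2^{i-2}}$.
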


Loosely speaking, Theorem~\ref{balance:of:simplices} asserts that the largest simplices from $\cT^d$ are extremely anisotropic. Part~I contains the main message of Theorem~\ref{balance:of:simplices}. Part~II is given as a complement estimating the quality of the upper bounds from Part~I. For $d \in\natur$ let
\[
	v(d) := \sup_{T \in \cT^d} \vol_\bL(T)
\]
By Theorem~\ref{balance:of:simplices}, we obtain $ v(d)= (d+1)^{O(2^d)}$. This improves the best available bound $v(d) = (2^d)^{O(2^d)}$, which can  be found in \cite[(10)]{MR1996360}.

Let us give a short overview of results related to Theorem~\ref{balance:of:simplices}. The study of upper bounds on the size of lattice polytopes with a fixed positive number of interior lattice points was initiated in \cite{MR688412} and continued in \cite{MR1138580} and \cite{MR1996360}. The sources \cite{MR1116368} and \cite{MR1166957} present qualitative arguments which show $v(d)<+\infty$ for every $d \in \natur$ (see also \cite[\S4]{arxiv:math/0001109} and \cite[\S7]{MR1996360}). Analoga of the results from \cite{MR688412,MR1138580,MR1996360} for lattice polytopes without interior lattice points were given in \cite{MR2599087,arXiv:1010.1077,arXiv:1101.4292}. The sources \cite{MR1039134} and \cite{MR2760660} provide classification of $d$-dimensional lattice polytopes with precisely one interior lattice point for the cases $d=2$ and $d=3$, respectively. Inequalities involving the area of lattice polygons can be found in \cite{MR0066430,MR0430960,MR2478059}.

\section{Preliminaries}

The abbreviations $\aff, \conv$ and $\intr$ stand for the affine hull, convex hull and interior, respectively. The origin of $\bV$ is denoted by $o$. By $e_1,\ldots,e_d$ we denote the standard basis of $\real^d$. The $l_\infty$-norm is denoted by  $\| \cdot \|_\infty$. We shall use the following three well-known results from the geometry of numbers.

\begin{theorem} \header{Minkowski's fundamental theorem.} \label{mink:1st}
	Let $U$ be an open convex set in $\bV$ with $U=-U$ and $\vol_{\bL}(U) > 2^d$. Then $U$ contains a nonzero element of $\bL$.
\end{theorem}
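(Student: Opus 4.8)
The plan is to derive Theorem~\ref{mink:1st} from the classical pigeonhole (Blichfeldt-type) argument applied to the dilate $\tfrac12 U$. Since normalized volume scales with the $d$-th power of the dilation factor, $\vol_{\bL}(\tfrac12 U) = 2^{-d}\vol_{\bL}(U) > 1$. First I would reduce to the case that $U$ is bounded: writing $U$ as the increasing union of the sets $U \cap (n\cdot B)$ over $n \in \natur$, where $B$ is an origin-centered Euclidean ball, continuity of measure gives $\vol_{\bL}(U \cap nB) > 2^d$ for some $n$; and $U \cap nB$ is again open, convex and centrally symmetric, so we may replace $U$ by it and assume henceforth that $U$ is bounded (hence of finite volume).

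Next I would fix a basis of $\bL$ and the associated half-open fundamental parallelepiped $F$, so that the translates $F + v$ with $v \in \bL$ partition $\bV$ and $\vol_{\bL}(F) = 1$. Since $U$ is bounded, only finitely many of the pieces $P_v := \bigl(\tfrac12 U\bigr) \cap (F + v)$, $v \in \bL$, are nonempty; they are pairwise disjoint and measurable, and their union is $\tfrac12 U$. Translating each piece into $F$ gives sets $P_v - v \subseteq F$. If these were pairwise disjoint we would obtain $1 = \vol_{\bL}(F) \ge \sum_{v \in \bL} \vol_{\bL}(P_v - v) = \sum_{v \in \bL} \vol_{\bL}(P_v) = \vol_{\bL}(\tfrac12 U) > 1$, a contradiction. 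Hence $(P_v - v) \cap (P_w - w) \ne \emptyset$ for some distinct $v, w \in \bL$; choosing a common point and translating back produces $x, y \in \tfrac12 U$ with $x - y = v - w \in \bL \setminus \{o\}$.

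To finish, set $g := x - y$. Then $2x, 2y \in U$; since $U = -U$, also $-2y \in U$; and by convexity $g = \tfrac12(2x) + \tfrac12(-2y) \in U$. Thus $g$ is a nonzero element of $\bL$ lying in $U$, as claimed. Note that the strict inequality $\vol_{\bL}(U) > 2^d$ is used precisely to force the overlap of two lattice translates of the pieces; under mere equality one additionally invokes a compactness argument, which is not needed here.

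The only genuinely delicate point is the measure-theoretic bookkeeping in the second paragraph: one must check that $\tfrac12 U$, being open and hence Lebesgue measurable, decomposes into countably many measurable pieces whose normalized volumes add up to $\vol_{\bL}(\tfrac12 U)$, and that pairwise disjointness of their translates into $F$ would cap this sum by $\vol_{\bL}(F)=1$. The reduction to bounded $U$ and the one-line convexity-and-symmetry conclusion are routine.
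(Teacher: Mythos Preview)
Your argument is correct: the reduction to bounded $U$, the Blichfeldt pigeonhole step applied to $\tfrac12 U$, and the convexity-plus-symmetry conclusion are all carried out cleanly and constitute the standard textbook proof of Minkowski's first theorem. The only cosmetic point is that the ball $B$ should be taken open so that $U \cap nB$ remains open, but this is clearly what you intend.

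As for comparison with the paper: there is nothing to compare. The paper does not prove Theorem~\ref{mink:1st}; it is listed among the preliminaries as a well-known result from the geometry of numbers, with references to the textbooks \cite{MR893813,MR1940576,MR2335496}. Your proof simply fills in what the paper takes for granted.
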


\begin{theorem} \label{mink:cor}
	Let $A$ be a real $d \times d$ matrix with $0<|\det A| <1$. Then there exists  $x \in \integer^d \setminus \{o\}$ satisfying $\|A x \|_\infty <1$.
\end{theorem}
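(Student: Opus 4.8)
The plan is to derive this directly from Minkowski's fundamental theorem (Theorem~\ref{mink:1st}) applied to a suitable symmetric convex body. Working in coordinates, identify $\bV$ with $\real^d$ and $\bL$ with $\integer^d$, so that $\det \bL = 1$ and $\vol_\bL = \vol$. Consider the set
\[
	U := \setcond{x \in \real^d}{\|A x\|_\infty < 1} = A^{-1}\bigl((-1,1)^d\bigr),
\]
where the last equality uses that $A$ is invertible, which follows from $\det A \neq 0$. Then $U$ is the preimage of the open cube $(-1,1)^d$ under the continuous linear map $x \mapsto Ax$, hence open and convex, and it is symmetric, $U = -U$, since $(-1,1)^d$ is.

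Next I would compute $\vol(U)$. Since $A^{-1}$ is linear and $\vol\bigl((-1,1)^d\bigr) = 2^d$, the change-of-variables formula gives $\vol(U) = |\det A^{-1}| \cdot 2^d = 2^d / |\det A|$. By hypothesis $0 < |\det A| < 1$, so $\vol_\bL(U) = \vol(U) = 2^d/|\det A| > 2^d$. Theorem~\ref{mink:1st} then provides a point $x \in \integer^d \setminus \{o\}$ with $x \in U$, which is precisely the assertion $\|Ax\|_\infty < 1$.

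There is no serious obstacle here: the statement is a classical corollary of Minkowski's theorem, and the only points requiring a word of care are that $\det A \neq 0$ guarantees the invertibility of $A$, so that $U$ is genuinely a bounded open parallelepiped rather than an unbounded slab, and that the strict inequality $|\det A| < 1$ feeds exactly the strict volume bound $\vol_\bL(U) > 2^d$ demanded by Theorem~\ref{mink:1st}, while the openness of $U$ yields the strict inequality $\|Ax\|_\infty < 1$ in the conclusion.
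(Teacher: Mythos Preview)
Your proof is correct and follows exactly the approach the paper indicates: the paper does not spell out a proof but simply remarks that Theorem~\ref{mink:cor} is a straightforward consequence of Theorem~\ref{mink:1st}, and your argument via $U = A^{-1}\bigl((-1,1)^d\bigr)$ is the standard way to make this precise.
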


\begin{theorem} \header{Blichfeldt's theorem.} \label{blichfeldt} Let $K$ be a compact convex set in $\bV$ containing $k$ points of $\bL$ that linearly span $\bV$. Then $d+d! \vol_\bL(K) \ge k$.
\end{theorem}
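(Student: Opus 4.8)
The plan is to deduce the inequality $d + d!\,\vol_\bL(K) \ge k$ — equivalently, $\vol_\bL(K) \ge (k-d)/d!$ — from a combinatorial estimate for triangulations. First I would fix $k$ points $v_1,\dots,v_k$ of $\bL$ lying in $K$ that span $\bV$ and replace $K$ by the lattice polytope $S := \conv\{v_1,\dots,v_k\} \subseteq K$; since $S \subseteq K$ and the normalized volume is monotone under inclusion, it suffices to prove $\vol_\bL(S) \ge (k-d)/d!$. We may assume that $S$ is $d$-dimensional, i.e.\ that $d+1$ of the points $v_i$ are affinely independent (this is the case in which $\vol_\bL(S)$ carries content). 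The strategy is then to cut $S$ into lattice simplices, bound the volume of each piece from below, and bound the number of pieces from below by $k-d$.

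The first ingredient is a triangulation $\TT$ of $S$ into $d$-dimensional lattice simplices in which every one of the points $v_1,\dots,v_k$ occurs as a vertex. Such a triangulation can be produced incrementally (a ``pushing'' triangulation): insert $v_1,\dots,v_k$ one by one, joining a point that lies outside the hull built so far by cones to the visible boundary faces, and star-subdividing at a point that lies inside. Let $m$ be the number of maximal (i.e.\ $d$-dimensional) cells of $\TT$ and let $V$ be the set of all vertices of $\TT$, so that $V \supseteq \{v_1,\dots,v_k\}$ and $\vol_\bL(S)$ is the sum of $\vol_\bL(\Delta)$ over the maximal cells $\Delta$. Each such $\Delta = \conv\{w_0,\dots,w_d\}$ is a $d$-dimensional lattice simplex, and in coordinates in which $\bL$ is the image of $\integer^d$ under a matrix of determinant $\pm\det\bL$ the quantity $d!\,\vol(\Delta) = |\det(w_1-w_0,\dots,w_d-w_0)|$ is a positive integer multiple of $\det\bL$. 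Hence $\vol_\bL(\Delta) \ge 1/d!$ for every maximal cell, and therefore $\vol_\bL(S) \ge m/d!$.

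The second ingredient is the bound $m \ge |V| - d$. Here I would invoke the connectivity of the dual graph of $\TT$, whose nodes are the maximal cells and whose edges join two cells sharing a $(d-1)$-dimensional face: if this graph had two or more components, the cells of one component and the cells of the rest would be two closed subsets of the $d$-ball $S$ meeting only in a set of dimension at most $d-2$, contradicting connectivity of $S$. Ordering the maximal cells as $\Delta_1,\dots,\Delta_m$ so that each $\Delta_\ell$ with $\ell \ge 2$ shares a facet — hence $d$ of its $d+1$ vertices — with some $\Delta_{\ell'}$, $\ell' < \ell$, we see that passing from $\Delta_1,\dots,\Delta_{\ell-1}$ to $\Delta_1,\dots,\Delta_\ell$ creates at most one new vertex; thus $|V| \le (d+1) + (m-1) = m + d$. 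Putting the pieces together,
\[
	d + d!\,\vol_\bL(K) \;\ge\; d + d!\,\vol_\bL(S) \;\ge\; d + m \;\ge\; d + \big(|V| - d\big) \;=\; |V| \;\ge\; k ,
\]
which is the assertion.

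The step I expect to be the real obstacle is the first ingredient: rigorously producing a triangulation in which all prescribed lattice points are vertices requires a careful case analysis in the incremental construction — in particular when a newly inserted point lands on a lower-dimensional face of an already built cell rather than in the interior of a cell — and the connectivity of the dual graph, while elementary, does use the small piece of point-set topology indicated above. Both are standard facts about triangulations of convex polytopes, and everything else in the argument is bookkeeping. If one wishes to avoid citing ``triangulations through a prescribed point set'' as a black box, the same proof can be recast as an induction on $k$ in which the inductive step either cones over a facet of $\conv\{v_1,\dots,v_{k-1}\}$ visible from $v_k$ or star-subdivides at an interior lattice point; this only reproduces the two ingredients above in a different order.
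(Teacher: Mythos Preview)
The paper does not actually prove this statement: Blichfeldt's theorem appears in the preliminaries among three ``well-known results from the geometry of numbers'' and is invoked without proof (only once, to pass from \eqref{vol:upper} to \eqref{card:upper}). There is therefore no argument in the paper to compare yours against.

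Your proof is the standard one and is sound. Passing to $S=\conv\{v_1,\dots,v_k\}\subseteq K$, triangulating $S$ with all the $v_i$ among the vertices, lower-bounding each $d$-dimensional lattice simplex by $\vol_{\bL}(\Delta)\ge 1/d!$, and then deducing $|V|\le m+d$ from connectivity of the facet-adjacency (dual) graph is exactly how this inequality is usually derived. The two ``obstacles'' you anticipate---existence of a placing triangulation through a prescribed point set, and connectedness of the dual graph of a triangulated convex polytope---are standard, and your sketches of them are adequate.

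One genuine caveat concerns the clause ``we may assume that $S$ is $d$-dimensional''. The hypothesis that the $k$ lattice points \emph{linearly} span $\bV$ does not by itself force their convex hull to be $d$-dimensional: for instance $(0,1),(1,1),(2,1)\in\integer^2$ linearly span $\real^2$ but are collinear, and taking $K=[0,2]\times[1-\epsilon,1+\epsilon]$ with small $\epsilon>0$ gives $k=3$ while $d+d!\,\vol_{\bL}(K)=2+8\epsilon<3$. So the inequality can fail if the hypothesis is read literally; the intended hypothesis is that the $k$ points \emph{affinely} span $\bV$, under which your reduction is immediate. This is an imprecision in the paper's formulation rather than a defect of your approach, and in the paper's sole application the relevant set is full-dimensional in its affine span, so the issue is purely one of wording.
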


Note that Theorem~\ref{mink:cor} is a straightforward consequence of Theorem~\ref{mink:1st}.

Let $T$ be a $d$-dimensional simplex in $\bV$ and let $p_0,\ldots,p_d$ be the vertices of $T$. Then every point $x \in \bV$ can be uniquely given by the values $\alpha_0,\ldots,\alpha_d \in \real$ satisfying
\begin{align}
	x & =\alpha_0 p_0 + \cdots + \alpha_d p_d,  \label{x=bary:rep}\\
	1 & = \alpha_0 + \cdots + \alpha_d. \nonumber
\end{align}
The values $\alpha_0,\ldots,\alpha_d$ are said to be the \term{barycentric coordinates} of $x$ with respect to the simplex $T$. The barycentric coordinates have the following geometric interpretation. Let $H_i$ be the affine hull of the facet of $T$ opposite to $p_i$. Then $\alpha_i$ is the \term{signed weighted distance from $x$ to $H_i$}:  $|\alpha_i|$ is the ratio of the distance of $x$ to $H_i$ to the distance of $p_i$ to $H_i$, and $\alpha_i >0$ if and only if $x$ and $p_i$ are in the same open halfspace defined by $H_i$. In particular, we have:
\begin{itemize}
	\item $x$ lies in the interior of $T$ if and only if $\alpha_i>0$ for all $i \in \{0,\ldots,d\}$. 
	\item For $i \in \{0,\ldots,d\}$, $x \in H_i$ if and only if $\alpha_i=0$.
\end{itemize}
The barycentric coordinates $\alpha_0,\ldots,\alpha_d$ of $x$ can be found from
\begin{equation} \label{bary:coord:in:matrix:form}
	\begin{bmatrix}
		p_0 & \cdots & p_d \\
		1 & \cdots & 1
	\end{bmatrix}
	\begin{bmatrix}
		\alpha_0 \\
		\vdots \\
		\alpha_d
	\end{bmatrix}
	= 
	\begin{bmatrix}
		x \\
		1
	\end{bmatrix}.
\end{equation}
(We recall that in matrix expressions we assume $\bV=\real^d$.) Since $p_0,\ldots,p_d$, are affinely independent, the $(d+1) \times (d+1)$-matrix in \eqref{bary:coord:in:matrix:form} is invertible. Thus, we can express $\alpha_i$'s by
\begin{equation*}
	\alpha_i:= e_i^\top 
	{\begin{bmatrix}
		p_0 & \cdots & p_d \\
		1 & \cdots & 1
	\end{bmatrix}}^{-1}
	\begin{bmatrix}
		x \\
		1
	\end{bmatrix}.
\end{equation*}
It follows that for every $i \in \{0,\ldots,d\}$ we have $\alpha_i:=l_i(x)$, where $l_i(x)$ is an affine-linear function in $x$. (A real-valued function $f$ on $\bV$ is called \term{affine-linear} if $f((1-\lambda) x+ \lambda y)=(1-\lambda) f(x) + \lambda f(y)$ for all $x, y \in \bV,\lambda \in \real$.) The functions $l_0,\ldots,l_d$ are uniquely determined by  
\begin{align} \label{l:i:def}
	l_i(p_j) & = \delta_{i,j} & & \forall \ i,j \in \{0,\ldots,d\},
\end{align}
where $\delta_{i,j}$ denotes the \term{Kronecker delta}. This follows by applying $l_i$ to \eqref{x=bary:rep} and then using the affine-linearity.

We shall use pairs $(I,J)$ of disjoint subsets of $\{0,\ldots,d\}$ with $I \cup J = \{0,\ldots,d\}$. Such a pair $(I,J)$ is called a \term{partition} if both $I$ and $J$ are nonempty. Every face of $T$ can be represented by
\begin{align} \label{F:I:def}
	F_I:= & \setcond{x \in T}{l_i(x)=0 \ \  \forall \ i \in I}  \\
		= & \conv \setcond{p_j}{j \in J} \nonumber
\end{align}
 for an appropriate pair $(I,J)$ as above. Clearly, partitions $(I,J)$ correspond to proper faces of $T$.

\section{The proofs}

\subsection{Polynomial inequalities for $\beta_i$'s (proof of Theorem~\ref{bary:ineq:alt})}

Let $T \in \cT^d$ and let $\beta_0,\ldots,\beta_d$ be the barycentric coordinates of the  interior lattice point of $T$. Our aim is to provide positive lower bounds for $\beta_i$'s in terms of $d$. Let us sketch how such bounds can be obtained (the idea is due to Hensley \cite[the proof of Lemma~2.3]{MR688412}). Let $p$ be the interior lattice point of $T$. One can show by contradiction that $p$ cannot be too close to the boundary of $T$ (that is, the minimal $\beta_i$ cannot be too small). If $p$ is close to a face $F$ of $T$, then, with the help of Minkowski's first fundamental theorem, one can determine a point $r$ in $\aff F$ whose barycentric coordinates are rational numbers of the form $\frac{m_0}{m},\ldots,\frac{m_d}{m}$ with $m_0,\ldots,m_d \in \integer$ and $m \in \natur$ and such that $r$ is close to $p$. It then turns out that $q:=(m+1) p - m r$ is another integral point in the interior of $T$, yielding a contradiction to $T \in \cT^d$. See also Fig.~\ref{jumping-idea} for an illustration.

\begin{figure}
\unitlength=1.3mm
\begin{center}
\begin{picture}(30,30)
	\put(0,0){\includegraphics[width=30\unitlength]{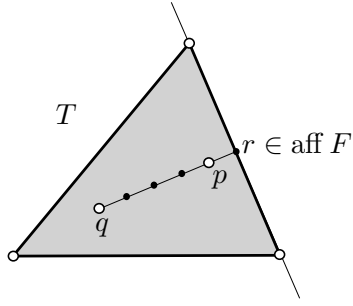}}
	\put(9,7){$q$}
	\put(21,12){$p$}
	\put(24,15){$r \in \aff F$}
	\put(5,18){$T$}
\end{picture}
\parbox[h]{0.8\textwidth}{\caption{\label{jumping-idea} Using a lattice point $p \in \intr T$ for construction of another lattice point $q \in \intr T$ in the case that $p$ is close to a face $F$ of $T$. The argument uses a rational point $r \in \aff F$ which approximates $p$ sufficiently well. The point $q$ is obtained by adding $m(p-r)$ to $p$, where $m$ is a common denominator of the barycentric coordinates of $r$ (in the figure one has $m=4$)}}
\end{center}
\end{figure}

\begin{proof}[Proof of Theorem~\ref{bary:ineq:alt}]
	Let $p$ be the interior lattice point of $T$.
	The point $p$ can be given by $p=\sum_{i=0}^d \beta_i p_i$, where $p_0,\ldots,p_d$ are appropriately indexed vertices of $T$. Without loss of generality let $I=\{t,\ldots,d\}$ with $1 \le t \le d$. Consider real unknowns $m_0,\ldots,m_{t-1}$ and $m$ and the vector 
	\[
		\overline{m}:=(m_0,\ldots,m_{t-1},m)^\top
	\]
	If 
	\begin{equation} \label{m:assumptions}
		\overline{m} \in \integer^{t+1}, \ m>0 \  \text{and} \ m=\sum_{i=0}^{t-1} m_i,
	\end{equation}
	then 
	\[
		r=\sum_{i=0}^{t-1} \frac{m_i}{m} p_i
	\]
	is an affine combination (with rational coefficients) of $p_0,\ldots,p_{t-1}$  and 
	\begin{align} 
		q  := & (m +1) p - m r \label{q:def} \\
		   = & (m +1) p - \sum_{i =0}^{t-1} m_i p_i  \nonumber
	\end{align}
	is an affine combination of $p$ and $r$ belonging to $\bL$.
	Below we determine a sufficient condition for the existence of values $m_0,\ldots,m_{t-1},m$ as above for which the point $q$ does not coincide with $p$ and lies in the interior of $T$. If \eqref{m:assumptions} holds, then $(m+1) \beta_i - m_i $ with $i\in \{0,\ldots,t-1\}$ and $(m+1) \beta_i$ with $i \in \{t,\ldots,d\}$ are the barycentric coordinates of $q$. Thus (taking into account the geometric meaning of the barycentric coordinates) we obtain that, assuming \eqref{m:assumptions}, the following conditions are equivalent:
	\begin{enumerate}[(i)]
		\item $q\in \intr T$;
		\item $(m  +1) \beta_i - m_i  > 0$ for every $i \in \{0,\ldots,t-1\}$. 
	\end{enumerate}
	
	Using (i) $\Leftrightarrow$ (ii) we can formulate the following system of conditions on $\overline{m}$ sufficient for $q$ to be a lattice point in $\intr T$ with $q \ne p$:

		\begin{empheq}[left=\empheqlbrace]{align}
			 \overline{m} & \in \integer^{t+1}, & \label{vec:k:int} \\
			 m & > 0, & \label{k>0} \\
			 \frac{1}{\beta_i} m_i - m & < 1 & \forall \ i \in \{0,\ldots,t-1\}, \label{k:single:k_i:nonsym}\\
			 m - \sum_{i = 0}^{t-1} m_i & = 0. \label{k=:eq}
		\end{empheq}

	We also consider another system somewhat similar to \eqref{vec:k:int}--\eqref{k=:eq}:

	\begin{empheq}[left=\empheqlbrace]{align}
			\overline{m} & \in \integer^{t+1}, & \label{vec:k:int:again} \\
			\overline{m} & \ne o, & \label{k_i:k:nontrivial}\\
			 \biggl|\frac{1}{\beta_i} m_i - m \biggl| &<1 & \forall \ i \in \{0,\ldots,t-1\}, \label{k:single:k_i:sym} \\
			 \biggl|m - \sum_{i=0}^{t-1} m_i\biggr| &< 1. & \label{k almost =:eq}
	\end{empheq}

	For \eqref{vec:k:int:again}--\eqref{k almost =:eq} Theorem~\ref{mink:cor} can be applied, but first we investigate the relation of \eqref{vec:k:int:again}--\eqref{k almost =:eq} to \eqref{vec:k:int}--\eqref{k=:eq}. We claim that  if  \eqref{vec:k:int:again}--\eqref{k almost =:eq} is solvable with respect to $\overline{m}$, then also \eqref{vec:k:int}--\eqref{k=:eq} is solvable with respect to $\overline{m}$. Assume that conditions \eqref{vec:k:int:again}--\eqref{k almost =:eq} are fulfilled. Then $m$ is not zero. In fact, if $m=0$, then \eqref{k:single:k_i:sym} together with \eqref{vec:k:int:again} and $0<\beta_i < 1$ implies $m_i=0$ for every $i \in \{0,\ldots,t-1\}$. This yields a contradiction to \eqref{k_i:k:nontrivial}. Hence $m \ne 0$. The system \eqref{vec:k:int:again}--\eqref{k almost =:eq} is invariant under the change of $\overline{m}$ to $-\overline{m}$. Thus, we can assume $m>0$. Condition \eqref{k almost =:eq} together with  \eqref{vec:k:int:again} implies \eqref{k=:eq}. The implication from \eqref{k:single:k_i:sym} to \eqref{k:single:k_i:nonsym} is trivial. This verifies the claim.

	Let us reformulate the system \eqref{vec:k:int:again}--\eqref{k almost =:eq} in matrix terms. We introduce the matrix
	
	\begin{align*}
		B  & := 
		\begin{bmatrix}
			\frac{1}{\beta_0} &  &  & -1 \\
			& \ddots & & \vdots \\
			& &  \frac{1}{\beta_{t-1}} & -1 \\
			-1 & \cdots & -1 & 1 
		\end{bmatrix}	
	\end{align*}
	of size $(t+1) \times (t+1)$ (the submatrix of $B$ generated by the first $t$ rows and $t$ columns is diagonal, with diagonal entries $\frac{1}{\beta_0},\ldots,\frac{1}{\beta_{t-1}}$). The system \eqref{vec:k:int:again}--\eqref{k almost =:eq} can thus be formulated as
	\begin{empheq}[left=\empheqlbrace]{align}
			 \overline{m} & \in \integer^{t+1}, \label{vec:k:int:once:again} \\ 
			 \overline{m} & \ne o, \\
			 \| B \overline{m} \|_\infty & < 1. \label{inf:norm:cond}
	\end{empheq}

	We compute the determinant of $B$ by transforming the matrix into an upper triangular form: all $-1$'s in the last row can be turned to $0$ by adding an appropriate linear combination of the first $t$ rows of $B$. Thus
	\begin{equation} \label{ratio}
		\det B =  \frac{1-\sum_{i =0}^{t-1} \beta_i }{\prod_{j = 0}^{t-1} \beta_j} =  \frac{\sum_{i \in I}\beta_i}{\prod_{i \in J} \beta_j}.
	\end{equation}
	Hence $\det B \ge 0$ and since $I \ne \emptyset$, we even have $\det B > 0$. We show $\det B \ge 1$ by contradiction. Assume that $\det B<1$. Then, by Theorem~\ref{mink:cor}, \eqref{vec:k:int:once:again}--\eqref{inf:norm:cond} is solvable. The system \eqref{vec:k:int:once:again}--\eqref{inf:norm:cond} is a matrix form of \eqref{vec:k:int:again}--\eqref{k almost =:eq}. Above we showed that solvability of \eqref{vec:k:int:again}--\eqref{k almost =:eq} implies solvability of \eqref{vec:k:int}--\eqref{k=:eq}. Thus, there exists $\overline{m}$ satisfying \eqref{vec:k:int}--\eqref{k=:eq}. With such $\overline{m}$ the lattice point $q$ given by \eqref{q:def} does not coincide with $p$ and lies in the interior of $T$. This yields a contradiction to $T \in \cT^d$. Thus, $\det B \ge 1$ and the assertion follows from \eqref{ratio}.
\end{proof}

Theorem~\ref{bary:ineq:alt} presents exponentially many inequalities for the barycentric coordinates of the interior lattice point: one for each $I$ with $\emptyset \varsubsetneq I \varsubsetneq \{0,\ldots,d\}$. It turns out that most of these inequalities are redundant. In fact, assume that for $i \in I$ and $j \in J$ one has $\beta_{i} > \beta_{j}$. Then we can modify $(I,J)$ by moving $i$ from $I$ to $J$ and moving $j$ from $J$ to $i$. By this change we lower the left hand side and raise the right hand side of \eqref{sum:prod:ineq:1} making the inequality tighter. We thus see that the inequalities for the partitions $(I,J)$ satisfying $\beta_{i} \le \beta_{j}$ for every $i \in I$ and every $j \in J$ imply the inequalities for all the remaining partitions. Such strongest inequalities can be described as follows. After an appropriate reindexing, we can assume that the barycentric coordinates $\beta_0,\ldots,\beta_d$ are in an ordered sequence, say $\beta_0 \ge \cdots \ge \beta_d > 0$. Then, taking into account the redundancy, only the following $d$ inequalities remain:

\begin{equation} \label{sum:prod:ineq}
	\sum_{i=j+1}^d \beta_i \ge \prod_{i=0}^j \beta_i
\end{equation}
where $j \in \{0,\ldots,d-1\}$.

\subsection{Lower bounds for $\beta_i$'s}

Below we use \eqref{sum:prod:ineq} to derive lower bounds for the barycentric coordinates $\beta_0,\ldots,\beta_d$. First we observe that the inequalities $\beta_0 \ge \cdots \ge \beta_d$ together with $\beta_0+ \cdots + \beta_d=1$ yield $\beta_0 \ge \frac{1}{d+1}$. Inequality \eqref{sum:prod:ineq} can be relaxed to
\begin{equation} \label{suc:pred:ineq}
	(d+1) \beta_{j+1} \ge \prod_{i=0}^j \beta_i
\end{equation}
On the left hand side of \eqref{suc:pred:ineq} we could use a smaller factor instead of $d+1$, but with the factor $d+1$ the lower bounds for $\beta_j$'s that we give below can be expressed by simpler formulas. By \eqref{suc:pred:ineq} each $\beta_{j}$ is bounded in terms of its `predecessors' $\beta_0,\ldots,\beta_{j-1}$. Thus, consecutively applying \eqref{suc:pred:ineq} we can bound every $\beta_j$ in terms of $j$ and $\beta_0$. Taking into account  $\beta_0 \ge \frac{1}{d+1}$, we then see that each $\beta_j$ can be bounded from below in terms of $j$ and $d$. In this way we arrive at Part~I of the following

\begin{theorem} \label{beta:bounds}
	Let $d \in \natur$. Then the following holds:
	\begin{enumerate}[I.]
 		\item For every $T \in \cT^d$ the (ordered) barycentric coordinates $\beta_0 \ge \cdots \ge \beta_d > 0$ of the unique interior lattice point of $T$ satisfy
		\begin{equation} \label{lower:bound:for:beta}
			\beta_i \ge (d+1)^{-2^i}
		\end{equation}
		for every $i \in \{0,\ldots,d\}$.
		\item \label{beta:for:zpw} There exists $T' \in \cT^d$ such that the barycentric coordinates $\beta_0 \ge \cdots \ge \beta_d > 0$ of the unique interior lattice point of $T'$ satisfy 
		\[
			\beta_i \le \frac{1}{2^{2^{i-1}}-1}.
		\]
		for every $i \in \{0,\ldots,d\}$.
	\end{enumerate}
\end{theorem}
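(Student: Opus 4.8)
For Part~I the plan is to prove \eqref{lower:bound:for:beta} by induction on $i$, using the relaxed inequality \eqref{suc:pred:ineq} together with the base estimate $\beta_0 \ge \frac{1}{d+1} = (d+1)^{-1} = (d+1)^{-2^0}$, which is just a consequence of $\beta_0 \ge \cdots \ge \beta_d$ and $\beta_0 + \cdots + \beta_d = 1$. For the inductive step, suppose $\beta_k \ge (d+1)^{-2^k}$ for all $k \le j$. Then by \eqref{suc:pred:ineq},
\[
	\beta_{j+1} \ge \frac{1}{d+1} \prod_{i=0}^{j} \beta_i \ge \frac{1}{d+1} \prod_{i=0}^{j} (d+1)^{-2^i} = (d+1)^{-1 - \sum_{i=0}^{j} 2^i} = (d+1)^{-1-(2^{j+1}-1)} = (d+1)^{-2^{j+1}},
\]
using the geometric sum $\sum_{i=0}^{j} 2^i = 2^{j+1}-1$. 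This closes the induction. The only subtlety worth a sentence is that \eqref{suc:pred:ineq} requires the barycentric coordinates to be indexed in nonincreasing order, which is exactly the normalization under which \eqref{sum:prod:ineq} (hence \eqref{suc:pred:ineq}) was derived, and which matches the hypothesis of the theorem. So Part~I is essentially bookkeeping on a recursion; I expect no real obstacle there.

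For Part~II the task is to exhibit a concrete simplex $T' \in \cT^d$ whose ordered barycentric coordinates decay at the doubly-exponential rate $\beta_i \le \frac{1}{2^{2^{i-1}}-1}$. The natural candidate, suggested by the notation ``zpw'' in the label and by the classical constructions of Zaks--Perles--Wills / the Sylvester-sequence weight systems, is the simplex associated with the sequence $y_0 = 2$, $y_{k+1} = y_k^2 - y_k + 1$ (equivalently $y_{k+1} - 1 = y_k(y_k-1)$), which satisfies $y_k = 2^{2^{k-1}} + (\text{lower order})$ and for which $\frac{1}{y_0} + \cdots + \frac{1}{y_{d-1}} + \frac{1}{y_0 \cdots y_{d-1} - 1} = 1$ exactly; the last denominator $h := y_0\cdots y_{d-1} - 1$ is the one that is the product of the others minus one. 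Concretely I would take $T'$ with vertices $p_0 = o$ and $p_i = h\, e_i / w_i$ suitably scaled — or more cleanly, realize the weight vector $(\beta_0,\dots,\beta_d) = \bigl(\tfrac{1}{y_0},\dots,\tfrac{1}{y_{d-1}},\tfrac{1}{h}\bigr)$ as the barycentric coordinates of the origin inside the lattice simplex $\conv\{v_0,\dots,v_d\}$ where $\sum \beta_i v_i = o$, choosing integer vertices $v_i$ with $\beta_i v_i \in \bL$ and $\gcd$ conditions ensuring the simplex is \emph{empty up to its single interior point}. The estimate $\beta_i = \frac{1}{y_{i-1}} \le \frac{1}{2^{2^{i-1}}-1}$ for $1 \le i \le d-1$, and $\beta_d = \frac1h \le \frac{1}{2^{2^{d-1}}-1}$, then follows from an easy induction showing $y_k \ge 2^{2^{k-1}}$ (indeed $y_{k+1} = y_k(y_k-1)+1 \ge y_k \cdot 2^{2^{k-1}-1} \cdot \dots$; one checks $y_k^2 - y_k + 1 \ge (2^{2^{k-1}})^2$ when $y_k \ge 2^{2^{k-1}}+1$).

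The main obstacle is the last point: verifying that the exhibited $T'$ genuinely lies in $\cT^d$, i.e. that it has \emph{exactly one} interior lattice point rather than zero or several. The ``only one'' part is where the number theory of the Sylvester-type recursion does the work — the identity $\sum \frac{1}{y_i} + \frac1h = 1$ with $h = \prod y_i - 1$ forces the origin to be interior, while a divisibility/coprimality argument (the $y_i$ are pairwise coprime, being terms of a Sylvester sequence) is needed to rule out any further interior lattice point; this is the analogue of the known extremality proofs and I would either cite the relevant construction from the literature referenced in the introduction (e.g.\ \cite{MR1996360}) or reproduce the short coprimality argument. Establishing emptiness is the delicate step; the doubly-exponential size estimate itself, given the construction, is again just a one-line induction.
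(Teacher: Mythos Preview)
Your Part~I argument is exactly the paper's: the same induction via \eqref{suc:pred:ineq}, the same base case $\beta_0 \ge (d+1)^{-1}$, the same geometric-sum computation. Nothing to add there.

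For Part~II you have identified the correct sequence (the Sylvester-type recursion $t_1=2$, $t_{n+1}=t_n^2-t_n+1$), but the paper's realization is considerably more direct than the one you sketch. Rather than building a general weighted lattice simplex with prescribed barycentric weights and then arguing coprimality to control interior lattice points, the paper simply takes the axis-aligned simplex
\[
T' := \conv\{o,\; t_1 e_1,\; \ldots,\; t_d e_d\}
\]
(Example~\ref{zpw-example}). Its unique interior lattice point is $e_1+\cdots+e_d$, a fact the paper cites from Hensley \cite{MR688412} rather than reproving; no coprimality argument is carried out. The ordered barycentric coordinates of this point are $\tfrac{1}{t_1},\ldots,\tfrac{1}{t_d},\tfrac{1}{t_{d+1}-1}$, and the claimed upper bound $\beta_i \le (2^{2^{i-1}}-1)^{-1}$ then drops out of the lower bound $t_n \ge 2^{2^{n-2}}$ from \eqref{t:n:bounds}, also quoted from \cite{MR1138580}. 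So the ``delicate step'' you flag is handled by citation, and the remainder is a one-liner.

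Two small slips in your write-up: with your indexing $y_k = t_{k+1}$, the last denominator is $h = y_0 \cdots y_{d-1} = t_{d+1}-1$, not $y_0\cdots y_{d-1}-1$ (by \eqref{t:n:another:def} one has $t_{d+1} = 1 + \prod_{i=1}^d t_i$); and your correspondence between $\beta_i$ and $1/y_{i-1}$ drifts by one index in the final paragraph. Neither affects the strategy, but both would need tidying before the argument is complete.
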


Part~II of Theorem~\ref{beta:bounds} is not used in the proof of the main result. By Part~II we only wish to estimate the quality of the upper bounds in Part~I.

\begin{proof}[Proof of Part~I of Theorem~\ref{beta:bounds}] We use induction on $i$. For $i=0$, \eqref{lower:bound:for:beta} is fulfilled. Assume the inequalities hold for all $i \in \{0,\ldots,j\}$ with some $j \in \{0,\ldots,d-1\}$. Using \eqref{suc:pred:ineq} and the inductive assumption we obtain 
\begin{align*}
	\beta_{j+1} \ge (d+1)^{-1} \prod_{i=0}^j (d+1)^{-2^i} = (d+1)^{-(1+\sum_{i=0}^j 2^i)} = (d+1)^{-2^{j+1}}.
\end{align*}
Thus, \eqref{lower:bound:for:beta} has been verified for $i=j+1$.
\end{proof}

For showing Part~II of Theorem~\ref{beta:bounds} we use the following

\begin{example} \header{A large simplex in $\cT^d$.} \label{zpw-example}
Let $\bV=\real^d$ and $\bL=\integer^d$. Below we introduce the simplex $T' \in \cT^d$ given in \cite[p.~189]{MR688412}. The simplex $T'$ is a slight modification of the simplex constructed by Zaks, Perles and Wills \cite{MR651251}. We first introduce a recursive sequence $(t_n)_{n \in \natur}$ by

\begin{equation*}
	t_n := 
	\begin{cases} 
		2 & \text{for} \ n=1, \\
		t_{n-1}^2 - t_{n-1} + 1 & \text{for} \ n \ge 2.
	\end{cases}
\end{equation*}

The sequence can also be defined by 

\begin{equation} \label{t:n:another:def}
	t_n = 
	\begin{cases} 
		2 & \text{for} \ n=1, \\
		1+ \prod_{i=1}^{n-1} t_i  & \text{for} \ n \ge 2.
	\end{cases}
\end{equation}

The following relations for $t_n$ will be useful:
\begin{align} 
	& 2^{2^{n-2}} \le t_n \le 2^{2^{n-1}}, \label{t:n:bounds} \\
	& \frac{1}{t_1}+ \ldots + \frac{1}{t_n}+\frac{1}{t_{n+1}-1}=1. \label{t:n:equation}
\end{align}
Relations \eqref{t:n:another:def}, \eqref{t:n:bounds} and \eqref{t:n:equation} can be found in \cite[p.~1026]{MR1138580}. We define the simplex 
\begin{equation*}
	T' := \conv \{o,t_1 e_1,\ldots, t_d e_d \}.
\end{equation*}
Hensley \cite[p.~189]{MR688412} noticed that $e_1+ \cdots + e_d$ is the unique interior lattice point of $T'$. Thus, $T' \in \cT^d$. The bounds \eqref{t:n:bounds} show that $T'$ is large for large $d$.
\end{example}

\begin{proof}[Proof of Part~\ref{beta:for:zpw} of Theorem~\ref{beta:bounds}]
	Consider $T'$ as in Example~\ref{zpw-example}. From \eqref{t:n:equation} we see that $\frac{1}{t_1},\ldots,\frac{1}{t_d}$ and $\frac{1}{t_{d+1}-1}$ are the barycentric coordinates of the interior lattice point of $T'$. The assertion follows from the lower bounds for $t_n$ given in \eqref{t:n:bounds}.
\end{proof}

\begin{remark}
	For $T \in \cT^d$ let $\beta_d(T)$ be the smallest barycentric coordinate of the interior lattice point of $T$. By Theorem~\ref{beta:bounds}.I. we show that
	\begin{equation} \label{c(d)>0}
		c(d):= \inf_{T \in \cT^d} \beta_d(T) \ge (d+1)^{-2^d}.
	\end{equation} 
	From the arguments given in the remainder of the paper it will be seen that the finiteness of $\cT^d / \Aff(\integer^d)$ is equivalent to the inequality $c(d)>0$. Pikhurko \cite[Section~7]{MR1996360} indicates that $c(d)>0$ follows from the arguments of Lawrence \cite[Lemma~5]{MR1116368}.  However, \cite{MR1116368} does not yield any explicit positive lower bound for $c(d)$ for a general $d$.
\end{remark}

\begin{remark} \header{Quality of the lower bounds for $\beta_i$'s.}
	The bound \eqref{lower:bound:for:beta} is tight for $i=0$. This can be seen by considering the simplices $\conv \{o,(d+1) e_1,\ldots,(d+1) e_d\}$ and $\conv \{-(e_1+ \cdots + e_d), e_1,\ldots,e_d\}$ (with respect to $\bL=\integer^d$) whose unique interior lattice point is the center of mass. For large $d$'s  \cite[Lemma~2.2]{MR1138580} provides the better bound $\beta_d \ge 14^{-2^{d+1}}$ than our bound $\beta_d \ge (d+1)^{-2^d}$ contained in \eqref{lower:bound:for:beta}. From this we see that the bound \eqref{lower:bound:for:beta} seems to become less tight as $i$ moves from $0$ to $d$. Still it should be mentioned that our lower bound for $\beta_d$ is better than the bound from \cite[Lemma~2.2]{MR1138580} for $1 \le d < 14^2-1 = 195$.

	The question on the determination of $c(d)$ given by \eqref{c(d)>0} was raised by Hensley \cite[p.~189]{MR688412}, who conjectured that the infimum in \eqref{c(d)>0} is attained for the simplex $T'$ from Example~\ref{zpw-example}. This conjecture is true for simplices $T \in \cT^d$ of the form $T=\conv \{o,s_1 e_1,\ldots,s_d e_d\}$ with $s_1,\ldots,s_d \in \natur$, as follows from \cite{MR1520110} and \cite{MR0043117}.
\end{remark}

\begin{figtabular}{c}
 
\unitlength=1.7mm
\begin{picture}(30,23)
	\put(0,-5){\includegraphics[width=30\unitlength]{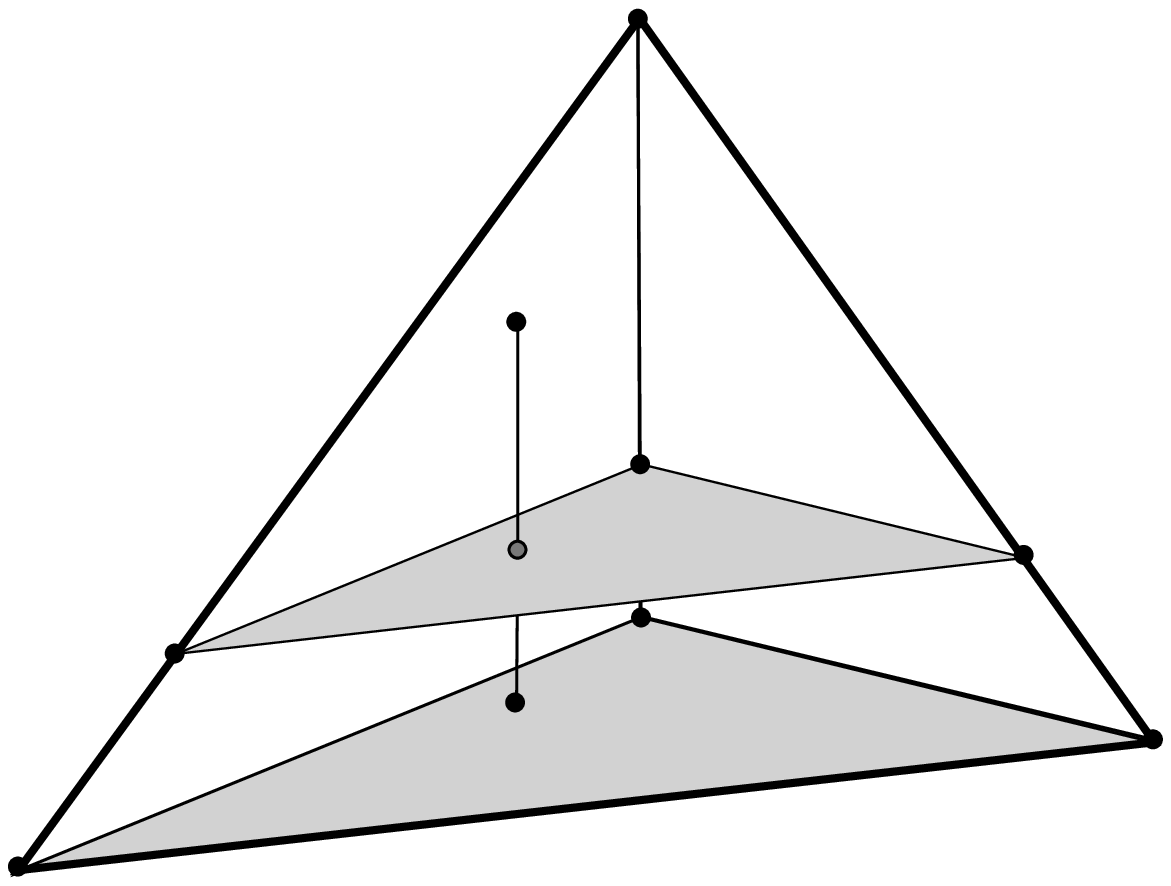}}
	\put(6,16){$T$}
	\put(14,8){$p$}
\end{picture}
\\
\parbox[t]{0.8\textwidth}{\caption{\label{simplex-sections} An interior point $p$ of a simplex $T$, a two-dimensional section of $T$ passing through $p$ and parallel to a facet of $T$ and a one-dimensional section of $T$ passing through $p$ and parallel to an edge of $T$. Lemma~\ref{volumes:of:sections}.III shows how the volumes of such sections can be expressed in terms of the volumes of the faces of $T$ and the barycentric coordinates of $p$.}}

\end{figtabular}

\subsection{Face volumes versus $\beta_i$'s}

 In the following lemma we discuss affine sections of $T$ passing through a fixed interior point of $T$ and parallel to faces of $T$, for an illustration see  Fig.~\ref{simplex-sections}. We shall use the affine-linear functions defined by \eqref{l:i:def}.

\begin{lemma} \label{volumes:of:sections}
	Let $T$ be a $d$-dimensional simplex in $\bV$ with vertices $p_0,\ldots,p_d$. Let $p$ be a point in $\intr T$ with barycentric coordinates $\beta_0,\ldots,\beta_d$. Let $I,J$ be disjoint sets with $I \cup J = \{0,\ldots,d\}$ and $J \ne \emptyset$ and let
	\begin{align}
		T_I &:=\setcond{x \in T}{l_i(x) = \beta_i \ \ \forall \ i \in I}, \label{T_I:def} \\
		\beta_J &:= \sum_{j \in J} \beta_j. \label{beta_J:def}
	\end{align}
	Then the following holds: 
	\begin{enumerate}[I.]
		\item The set $T_I$ is a simplex of dimension $\card{J}-1$, and the points $\sum_{i \in I} \beta_i p_i + \beta_{J} p_j$ with $j \in J$ are the vertices of $T_I$.
		\item If $x \in T_I$, then the values $\frac{l_j(x)}{\beta_{J}}$ with $j \in J$ are the barycentric coordinates of $x$ with respect to $T_I$.
		\item One has $\vol(T_I) = \beta_{J}^{\card{J}-1} \vol(F_I)$, where $F_I$ is defined by \eqref{F:I:def}.
	\end{enumerate}
\end{lemma}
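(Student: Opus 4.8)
The plan is to set up coordinates adapted to the partition $(I,J)$ and then read off all three statements from a single change of variables. Write $J = \{j_0,\ldots,j_k\}$ with $k = \card{J}-1$, and for each $j \in J$ put $q_j := \sum_{i \in I} \beta_i p_i + \beta_J p_j$. First I would verify that each $q_j$ lies in $T_I$: by the defining property \eqref{l:i:def} of the functions $l_i$, applied to $q_j = \sum_{i \in I}\beta_i p_i + \beta_J p_j$, one gets $l_i(q_j) = \beta_i$ for $i \in I$ (here one uses $\sum_{i \in I}\beta_i + \beta_J = 1$, so that $q_j$ is a genuine affine combination of the $p_\ell$, and $l_i$ picks out the coefficient of $p_i$), hence $q_j \in T_I$. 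Affine independence of the $q_j$, $j \in J$, follows from affine independence of the $p_j$, $j \in J$, since the map $x \mapsto \sum_{i \in I}\beta_i p_i + \beta_J x$ is an injective affine map (it is a homothety with ratio $\beta_J > 0$ composed with a translation). This gives one inclusion and the affine-independence half of Part~I.

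Next I would pin down $T_I$ exactly. Any $x \in T_I$ satisfies $l_i(x) = \beta_i$ for $i \in I$ and $l_j(x) \ge 0$ for $j \in J$, with $\sum_{j \in J} l_j(x) = 1 - \sum_{i \in I}\beta_i = \beta_J$. Writing $x = \sum_{i \in I}\beta_i p_i + \sum_{j \in J} l_j(x) p_j$ and substituting $l_j(x) = \beta_J \mu_j$, the $\mu_j$ are nonnegative and sum to $1$, and $x = \sum_{j \in J}\mu_j q_j$. Thus $T_I = \conv\{q_j : j \in J\}$, completing Part~I, and the $\mu_j = l_j(x)/\beta_J$ are exactly the barycentric coordinates of $x$ with respect to $T_I$, which is Part~II. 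Part~II is really just the bookkeeping identity $x = \sum_{j \in J}(l_j(x)/\beta_J) q_j$ together with $\sum_j l_j(x)/\beta_J = 1$ and uniqueness of barycentric coordinates.

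For Part~III I would compare $T_I = \conv\{q_j : j\in J\}$ with $F_I = \conv\{p_j : j \in J\}$, both $k$-dimensional, via the affine map $\phi : \aff F_I \to \aff T_I$, $\phi(x) = \sum_{i \in I}\beta_i p_i + \beta_J x$. Since $\phi(p_j) = q_j$ for $j \in J$, $\phi$ maps $F_I$ onto $T_I$; its linear part is multiplication by $\beta_J$ on the $k$-dimensional linear space parallel to $\aff F_I$, so it scales $k$-dimensional volume by $\beta_J^k$. Hence $\vol(T_I) = \beta_J^k \vol(F_I) = \beta_J^{\card J - 1}\vol(F_I)$, which is the claimed formula. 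I do not expect a serious obstacle here; the only point requiring a little care is to make the volume-scaling argument coordinate-clean, i.e. to note that $\phi$ restricted to the affine flat $\aff F_I$ is a similarity with ratio $\beta_J$ in the $k$-dimensional sense, so that the Jacobian factor is $\beta_J^k$ rather than $\beta_J^d$. One convenient way to make this rigorous is to choose an affine coordinate system on $\aff F_I$ (e.g. with origin $p_{j_0}$ and basis $p_{j_1}-p_{j_0},\ldots,p_{j_k}-p_{j_0}$) in which $F_I$ is the standard simplex; then $T_I$ has vertices $q_{j_0},\ldots,q_{j_k}$ with $q_{j_\ell}-q_{j_0} = \beta_J(p_{j_\ell}-p_{j_0})$, so the edge vectors are scaled by $\beta_J$ and the determinant formula for simplex volume gives the factor $\beta_J^k$ directly. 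Assembling these pieces yields all of I--III.
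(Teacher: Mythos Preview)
Your argument is correct and follows essentially the same route as the paper: both identify the affine map $y \mapsto \sum_{i\in I}\beta_i p_i + \beta_J y$ sending $F_I$ onto $T_I$, deduce Parts~I and II from the resulting convex-combination identity $x=\sum_{j\in J}(l_j(x)/\beta_J)\,q_j$, and obtain Part~III from the fact that this map scales $(\card{J}-1)$-dimensional volume by $\beta_J^{\card{J}-1}$. Your write-up is a bit more explicit (checking $q_j\in T_I$ and spelling out the Jacobian factor), but there is no substantive difference in method.
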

\begin{proof} 
Let $x \in T_I$. The values $\frac{l_j(x)}{\beta_{J}}$ with $j \in J$  satisfy
\begin{align*}
	\sum_{j \in J} \frac{l_j(x)}{\beta_{J}} = \frac{1}{\beta_{J}} \left(1-\sum_{i \in I} \beta_i \right) = 1.
\end{align*}

Furthermore
\begin{align*}
	x = \sum_{i=0}^d l_i(x) p_i & = \sum_{i \in I} \beta_i p_i + \sum_{j \in J} l_j(x) p_j \\
	& = \sum_{j \in J} \frac{l_j(x)}{\beta_{J}} \left( \sum_{i \in I} \beta_i p_i+ \beta_{J} p_j \right).
\end{align*}

Thus $x$ is a convex combination of the points $\sum_{i \in I} \beta_i p_i + \beta_{J} p_j$ with $j \in J$. These $\card{J}$ points are affinely independent since they are obtained by applying the nonsingular affine transformation $y \mapsto \sum_{i \in I} \beta_i p_i + \beta_J y$ ($y \in \bV$) to the affinely independent points $p_j$ with $j \in J$. This yields Parts~I and II. Part~III follows in view of the equality $F_I= \conv \setcond{p_j}{j \in J}$.
\end{proof}

The following assertion appears in \cite[Lemma~5]{MR1996360}.

\begin{lemma} \label{vol:bary:lem}
	Let $T$ be a $d$-dimensional simplex in $\bV$ such that the interior of $T$ contains precisely one point of $\bL$. Let $\beta_0,\ldots,\beta_d>0$ be the barycentric coordinates of the unique interior lattice point of $T$ and let $N$ be a subset of $\{0,\ldots,d\}$ of cardinality $d$. Then 
	\[
		\vol_\bL(T) \le \frac{1}{d! \prod_{n \in N} \beta_n}.
	\]
\end{lemma}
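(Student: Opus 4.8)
The plan is to use Blichfeldt's theorem (Theorem~\ref{blichfeldt}) applied not to $T$ itself but to a suitably scaled-down copy of $T$ that still captures enough lattice structure. Let $p$ denote the unique interior lattice point of $T$, with barycentric coordinates $\beta_0,\ldots,\beta_d$, and let $p_0,\ldots,p_d$ be the vertices. Let $n_0$ be the index \emph{not} in $N$, so $N = \{0,\ldots,d\}\setminus\{n_0\}$. First I would form the simplex $T'$ obtained from $T$ by a homothety centered at the vertex $p_{n_0}$ that maps $p$ onto $p_{n_0}$'s opposite facet $F_{\{n_0\}}$; more precisely, since $l_{n_0}(p)=\beta_{n_0}$, scaling $T$ from $p_{n_0}$ by the factor $1-\beta_{n_0} = \sum_{n\in N}\beta_n$ produces a simplex $T'\subseteq T$ whose "top'' vertex is still $p_{n_0}$ and whose opposite facet passes through $p$. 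Alternatively, and perhaps more cleanly, I would directly invoke Lemma~\ref{volumes:of:sections} with $I=\{n_0\}$ and $J=N$: the section $T_{\{n_0\}}$ is a $(d-1)$-simplex through $p$ parallel to the facet $F_{\{n_0\}}$, and $\vol(T_{\{n_0\}}) = \beta_{n_0}^{\,d-1}\vol(F_{\{n_0\}})$.

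The key idea is that the half-open pyramid spanned by $p_{n_0}$ over the section through $p$ contains no lattice point in its interior other than $p$ itself (any such point would be interior to $T$), so by Blichfeldt we can bound its volume. Concretely, I would consider the simplex $S := \conv(\{p\}\cup\{\,\beta_{n_0}^{-1}(p - (1-\beta_{n_0})p_j) : j\in N\,\})$ — i.e., the simplex obtained by "radially projecting'' $T$ from $p_{n_0}$ so that the facet $F_{\{n_0\}}$ lands on the parallel hyperplane through $p$, with $p$ playing the role of an apex. The point is to exhibit a $d$-dimensional simplex, contained in $T$ and containing $p$, together with $d$ further lattice points affinely spanning $\bV$, so that Blichfeldt gives an upper bound on its normalized volume; then dividing by the appropriate products of $\beta_j$'s from the homothety ratios recovers $\vol_\bL(T)$. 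The cleanest packaging: since $p\in\bL$, translate so $p=o$; then $p_0,\ldots,p_d$ are vectors with $\sum\beta_i p_i = o$, and one checks that $q_j := p_j/\beta_{n_0}$ for... — actually the honest route is to bound $\vol_\bL$ of the simplex $\conv\{o, \beta_{n_0}p_0,\ldots\}$-type object by $1$ via Minkowski/Blichfeldt and unwind.

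The step I expect to be the main obstacle is getting the combinatorial bookkeeping of the homothety ratios exactly right so that the single factor $\prod_{n\in N}\beta_n$ (and not, say, $\beta_{n_0}^{\,d-1}$ times something) emerges. The natural first attempt via Lemma~\ref{volumes:of:sections} yields a factor $\beta_{n_0}^{\,d-1}$, which is the wrong shape; the correct argument must instead iterate, peeling off one vertex at a time, or apply Blichfeldt to a cleverly chosen lattice simplex whose volume is $\vol_\bL(T)\cdot d!\cdot\prod_{n\in N}\beta_n$ and which contains no lattice point in a suitable open region. I would therefore argue by contradiction: suppose $\vol_\bL(T) > (d!\prod_{n\in N}\beta_n)^{-1}$; then construct an open symmetric (or Blichfeldt-type) body of normalized volume exceeding the relevant threshold, forcing a second lattice point into $\intr T$ and contradicting the hypothesis. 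Verifying that this body is genuinely lattice-point-free away from $p$ — which rests on the geometric meaning of barycentric coordinates, exactly as in the proof of Theorem~\ref{bary:ineq:alt} — is where the real content lies, while the volume computation itself is routine once the body is correctly identified.
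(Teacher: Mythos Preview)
Your proposal is not yet a proof: it is a sequence of approaches, each of which you correctly flag as either giving the wrong factor or being only a vague gesture. The Blichfeldt route and the homothety/section route via Lemma~\ref{volumes:of:sections} indeed do not produce $\prod_{n\in N}\beta_n$, and your final paragraph (``construct an open symmetric body \ldots'') is the right instinct but stops exactly at the point where the content lies: you never say \emph{which} body.

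The missing idea is this. With $N=\{1,\ldots,d\}$ (say), take the parallelotope
\[
P \;=\; \bigl\{\,x \;:\; 0 \le l_i(x) \le 2\beta_i \ \text{for } i=1,\ldots,d\,\bigr\},
\]
which is centrally symmetric about $p$. The crucial and non-obvious step is that $P \subseteq T \cup (2p-T)$: for every $x$ one has $l_0(x)+l_0(2p-x)=2\beta_0>0$, so at least one of $l_0(x)\ge 0$ or $\tilde l_0(x)\ge 0$ holds, forcing $x\in P$ into one of the two simplices. Since $T$ and its reflection $2p-T$ each have $p$ as their unique interior lattice point, so does $P$, and Minkowski's fundamental theorem gives $\vol_\bL(P)\le 2^d$. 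A routine computation in coordinates where $T$ is the standard simplex yields $\vol(P)=2^d d!\,\vol(T)\prod_{i=1}^d\beta_i$, and the bound follows. Your attempts stay inside $T$ alone; the trick is precisely to pass to the symmetric union $T\cup(2p-T)$, which is what makes the product $\prod_{n\in N}\beta_n$ appear with no spurious power of $\beta_{n_0}$.
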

\begin{proof}
	Let $p$ be the lattice point in the interior of $T$. We have $p = \sum_{i=0}^d \beta_i p_i$, where $p_i$'s are appropriately indexed vertices of $T$. Without loss of generality assume $N=\{1,\ldots,d\}$. Consider affine-linear functions $l_0,\ldots,l_d$ defined by \eqref{l:i:def}. We have $l_i(p)=\beta_i$ for $i \in \{0,\ldots,d\}$. The simplex $\Tilde{T}:=2p-T$ is a reflection of $T$ in $p$; $\Tilde{T}$ can be described as 
	\[
		\Tilde{T}:=\setcond{x \in \real^d}{\Tilde{l}_0(x) \ge 0, \ldots, \Tilde{l}_d(x) \ge 0}
	\]
	where $\Tilde{l}_i$'s are defined by $\Tilde{l}_i(x):=l_i(2p-x)$ for $x \in \real^d$ and $i \in \{0,\ldots,d\}$. By affine-linearity of $l_0$, for every $x\in \bV$ we have
	\[
		0<\beta_0=l_0(p)=\frac{1}{2}(l_0(x)+l_0(2p-x)) = \frac{1}{2} (l_0(x) + \Tilde{l}_0(x)).
	\]
	Consequently, for every $x \in \bV$, $l_d(x) \ge 0$ or $\Tilde{l}_d(x) \ge 0$. The set 
	\[
		P:=\setcond{x \in \real^d}{l_i(x) \ge 0 \ \textand \ \Tilde{l}_i(x) \ge 0 \ \textforall \ i=1,\ldots,d}
	\]
	is a parallelotope with center at $p$. Let us show $P \subseteq T \cup \Tilde{T}$ (see also Fig.~\ref{box-from-simplex}):

\begin{figure}
\unitlength=1.5mm
\begin{center}
\begin{picture}(30,25)
	\put(0,0){\includegraphics[width=30\unitlength]{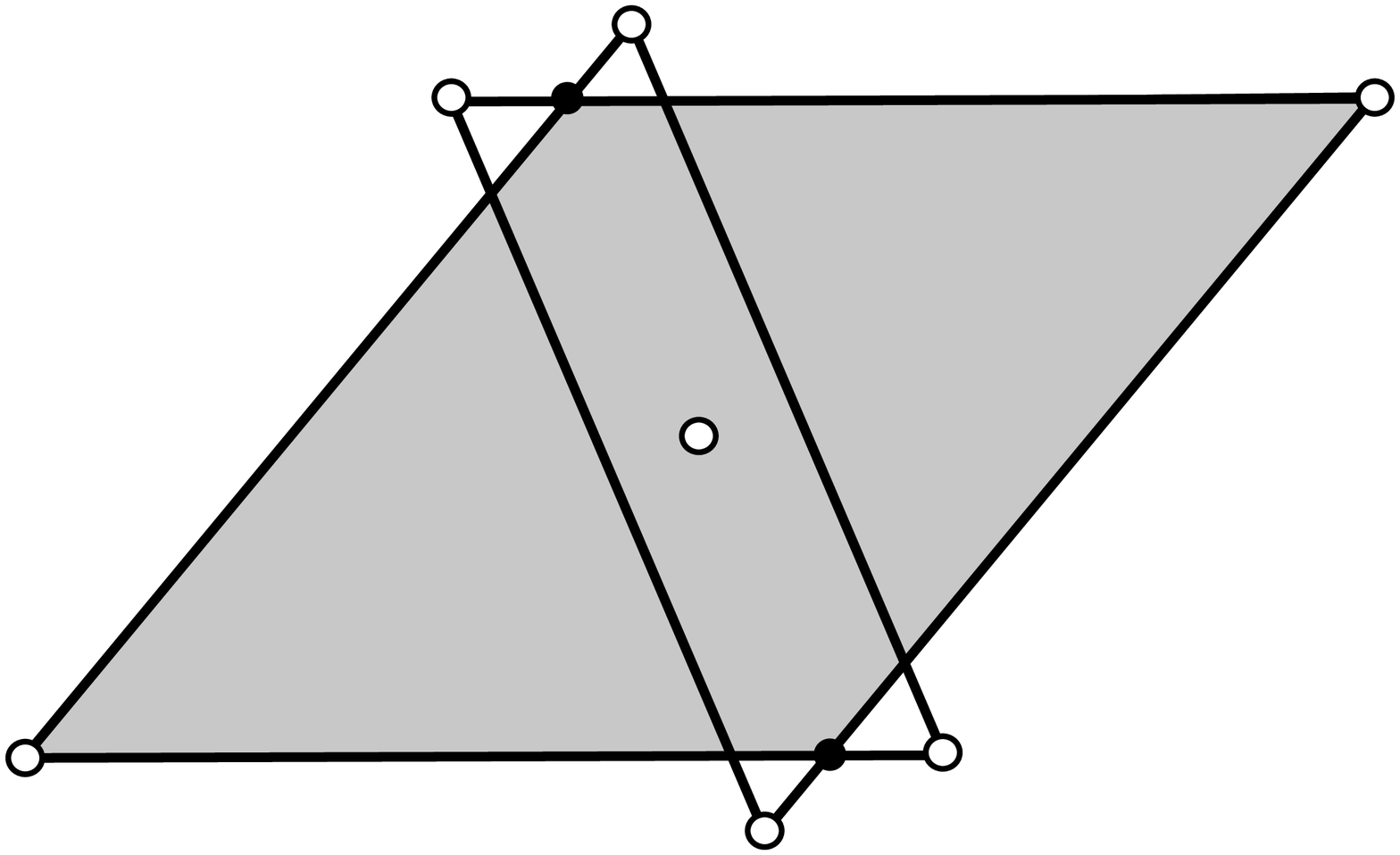}}
	\put(15,7){$p$}
	\put(3,8){$T$}
	\put(20,17){$2p-T$}
\end{picture}
\parbox[h]{0.8\textwidth}{\caption{\label{box-from-simplex} Illustration to the proof of Lemma~\ref{vol:bary:lem}. The center of the symmetry $p$ of the parallelotope $P$ (shaded) is the unique interior lattice point of $P$}}
\end{center}
\end{figure}

	\begin{align*}
		P  :=&\setcond{x \in \real^d}{l_1(x) \ge 0, \ldots, l_d(x) \ge 0, \ \Tilde{l}_1(x) \ge 0, \ldots, \Tilde{l}_d(x) \ge 0} \\
			 = & \Bigl\{ x \in \real^d \setcondsep l_1(x) \ge 0, \ldots, l_d(x) \ge 0, \ \Tilde{l}_1(x) \ge 0, \ldots, \Tilde{l}_d(x) \ge 0 \bigr. \\
			& \bigl. \textand \ (l_0(x) \ge 0 \ \textor \ \Tilde{l}_0(x) \ge 0) \Bigr\} \\
			= & \setcond{x \in \real^d}{l_0(x) \ge 0, \ldots, l_d(x) \ge 0, \ \Tilde{l}_1(x) \ge 0, \ldots, \Tilde{l}_d(x) \ge 0} \\
			& \cup \setcond{x \in \real^d}{l_1(x) \ge 0, \ldots, l_d(x) \ge 0, \ \Tilde{l}_0(x) \ge 0, \ldots, \Tilde{l}_{d}(x) \ge 0} \\
			& \subseteq T \cup \Tilde{T}.
	\end{align*}
	Since $p$ is the only interior lattice point of both $T$ and $\Tilde{T}$, we deduce that $p$ is the only interior lattice point of $P$. Due to `affine invariance' of the ratio $\vol(T)/\vol(P)$ for computing $\vol(T)/\vol(P)$ we can assume that $\bV=\real^d$ and $p_0=o, p_1=e_1,\ldots,p_d=e_d$. With this assumption we get $p=(\beta_1,\ldots,\beta_d)^\top$ and $P$ is precisely the set of vectors $x=(x_1,\ldots,x_d)^\top$ for which $0 \le x_i \le 2 \beta_{i}$. Then $\vol(P) = 2^d \prod_{i=1}^{d} \beta_i$, $\vol(T) = \frac{1}{d!}$ and we obtain $\vol(T)/\vol(P) = \frac{1}{d! 2^d \prod_{i=1}^{d} \beta_i}$. By Minkowski's fundamental theorem, $\vol_\bL(P) \le 2^d$. Hence 
	\begin{align*}
		\vol_\bL(T) & = \frac{\vol(T)}{\det \bL} = \frac{\vol(T)}{\vol(P)} \cdot \vol_{\bL}(P) \\ & \le \frac{\vol(T)}{\vol(P)} 2^d = \frac{1}{d! 2^d \prod_{i=1}^{d} \beta_i} 2^d
		 = \frac{1}{d! \prod_{i=1}^{d} \beta_i}.
	\end{align*}
\end{proof}

The following theorem follows directly from Lemma~\ref{vol:bary:lem}.

\begin{theorem} \label{face:vol:versus:beta}
	Let $T \in \cT^d$ and $\beta_0,\ldots,\beta_d$ be the barycentric coordinates of the interior lattice point of $T$. Let $I$ and $N$ be disjoint sets such that $I \cup N$ is a $d$-element subset of $\{0,\ldots,d\}$. Then 
	\[
		\vol_{\bL}(F_I) \le \frac{1}{\card{N}! \prod_{n \in N} \beta_n}.
	\]
\end{theorem}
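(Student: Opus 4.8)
The plan is to reduce Theorem~\ref{face:vol:versus:beta} to Lemma~\ref{vol:bary:lem} by passing from the face $F_I$ of $T$ to a suitable auxiliary simplex that lies in the affine hull of $F_I$ and contains exactly one lattice point of the sublattice $\bL \cap \aff(F_I - F_I)$ in its relative interior. The natural candidate is an affine section of $T$ through the interior lattice point $p$ and parallel to $F_I$, which is exactly the object studied in Lemma~\ref{volumes:of:sections}. Write $T_I$ for the simplex defined by \eqref{T_I:def}; by Lemma~\ref{volumes:of:sections}.I it is $(\card{J}-1)$-dimensional with $J := \{0,\ldots,d\}\setminus I \supseteq N$ (since $I$ and $N$ are disjoint and $I \cup N$ has $d$ elements, $J = N \cup \{j_0\}$ for the single index $j_0 \notin I\cup N$, so $\card{J} = \card{N}+1$).

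First I would observe that $p$ lies in the relative interior of $T_I$: indeed $l_i(p) = \beta_i$ for all $i$, so $p$ satisfies the defining equations of $T_I$, and since all $\beta_j > 0$ its barycentric coordinates $l_j(p)/\beta_J = \beta_j/\beta_J$ with respect to $T_I$ (by Lemma~\ref{volumes:of:sections}.II) are strictly positive. Next, $p$ is the \emph{only} lattice point in the relative interior of $T_I$: any lattice point in $\relintr T_I \subseteq \intr T$ would be an interior lattice point of $T$, hence equal to $p$. So $T_I$ is a $(\card{N}+1-1) = \card{N}$-dimensional lattice-like simplex in $\aff(F_I)$ with a single relative-interior lattice point, and Lemma~\ref{vol:bary:lem} (applied inside the affine hull of $T_I$, with the lattice $\bL \cap \lin(F_I - F_I)$) gives
\begin{equation*}
	\vol_{\bL}(T_I) \le \frac{1}{\card{N}! \prod_{n \in N} \left(\beta_n / \beta_J\right)},
\end{equation*}
because the barycentric coordinates of $p$ with respect to $T_I$ are the numbers $\beta_j/\beta_J$, $j \in J$, and $N \subseteq J$ has cardinality $\card{J}-1 = \dim T_I$.

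Finally I would combine this with Lemma~\ref{volumes:of:sections}.III, which states $\vol(T_I) = \beta_J^{\card{J}-1}\vol(F_I)$, i.e.\ $\vol_{\bL}(T_I) = \beta_J^{\card{N}}\vol_{\bL}(F_I)$ (the normalizing lattice determinant is the same for $T_I$ and $F_I$ since they span the same linear subspace). Substituting and cancelling the factor $\beta_J^{\card{N}}$ against the $\prod_{n\in N}\beta_J^{-1} = \beta_J^{-\card{N}}$ appearing in the denominator above yields exactly $\vol_{\bL}(F_I) \le \frac{1}{\card{N}! \prod_{n\in N}\beta_n}$. The only point requiring care — and the step I would flag as the main obstacle — is the bookkeeping for the normalized volumes: one must check that Lemma~\ref{vol:bary:lem} applies verbatim with respect to the induced lattice on $\aff(F_I)$, that $T_I$ and $F_I$ share the relevant linear hull so their $\vol_{\bL}$'s are scaled by the same determinant, and that the index set $N$ has the right size ($\card{N} = \dim T_I$) to serve as the deleted-coordinate set in Lemma~\ref{vol:bary:lem}. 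Once these identifications are in place the computation is a one-line cancellation.
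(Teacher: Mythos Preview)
Your proposal is correct and follows essentially the same route as the paper: pass to the parallel section $T_I$ through $p$, use Lemma~\ref{volumes:of:sections}.II--III to relate $\vol_\bL(T_I)$ and $\vol_\bL(F_I)$ and to identify the barycentric coordinates $\beta_j/\beta_J$ of $p$ in $T_I$, then apply Lemma~\ref{vol:bary:lem} in dimension $\card{N}=\card{J}-1$ and cancel the powers of $\beta_J$. You are in fact more explicit than the paper on two points it leaves implicit---that $p$ is the \emph{unique} relative-interior lattice point of $T_I$ (since $\relintr T_I\subseteq\intr T$), and that Lemma~\ref{vol:bary:lem} is being applied with respect to the induced sublattice on $\aff T_I$---so the ``main obstacles'' you flag are exactly the right ones.
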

\begin{proof} Let $J:=\{0,\ldots,d\} \setminus I$. By Lemma~\ref{volumes:of:sections}.III we have $\vol_{\bL}(F_I) = \beta_{J}^{1-\card{J}} \vol_{\bL}(T_I)$ with $T_I$ and $\beta_{J}$ defined by \eqref{T_I:def} and \eqref{beta_J:def}, respectively. By Lemma~\ref{volumes:of:sections}.II, the interior lattice point of $T$ has barycentric coordinates $\frac{\beta_j}{\beta_{J}}$ with $j \in J$ with respect to the simplex $T_I$. Thus, applying Lemma~\ref{vol:bary:lem} (for dimension $\card{J}-1$) to the $(\card{J}-1)$-dimensional simplex $T_I$ we get
\begin{equation*}
	\vol_{\bL}(F_I) = \beta_{J}^{1-\card{J}} \vol_{\bL}(T_I) \le \beta_{J}^{1-\card{J}} \frac{1}{\card{N}! \prod_{n \in N} \frac{\beta_n}{\beta_{J}}}  = \frac{1}{\card{N}! \prod_{n \in N} \beta_n}.
\end{equation*}
\end{proof}

\subsection{The conclusion}

\begin{proof}[Proof of Theorem~\ref{balance:of:simplices}]
	First we show Part~I. Let $\beta_0 \ge \cdots \ge  \beta_d > 0$ be the barycentric coordinates of the unique interior lattice point of $T$. For $i \in \{1,\ldots,d\}$ we define $G_i$ by $G_i:=F_I$, where  $I:=\{0,\ldots,d\} \setminus \{0,\ldots,i\}$. By Theorems~\ref{beta:bounds} and \ref{face:vol:versus:beta} we obtain
	\begin{equation*}
		\vol_\bL(G_i) = \vol_\bL(F_I) \le \frac{1}{i! \prod_{n=0}^{i-1} \beta_n} \le \frac{1}{i!} \prod_{n=0}^{i-1} (d+1)^{2^n} = \frac{1}{i!} (d+1)^{2^i-1}.
	\end{equation*}
	This yields \eqref{vol:upper}. Inequality \eqref{card:upper} follows directly from \eqref{vol:upper} and Blichfeldt's theorem.

	For showing Part~II we consider $\bL=\integer^d$ and the simplex $T'$ from Example~\ref{zpw-example}. For $i \in \{1,\ldots,d\}$ let $G_i := \conv \{o,t_1 e_1,\ldots,t_i e_i\}$. Applying \eqref{t:n:another:def} and \eqref{t:n:bounds} we obtain

	\begin{align*}
		\vol_\bL(G_i) & = \frac{1}{i!} \prod_{j=1}^i t_j = \frac{1}{i!} (t_{i+1}-1) \ge \frac{1}{i!} (2^{2^{i-1}}-1).
	\end{align*}
	This shows \eqref{vol:lower}. Since $G_i \supseteq \setcond{j e_i}{j = 0,\ldots,t_i}$, we have $\card{G_i \cap \bL} \ge t_i$. The bound \eqref{card:lower} follows after applying \eqref{t:n:bounds}.
\end{proof}

\begin{acknowledgements*}
	I would like to thank A. M. Kasprzyk, Ch. Wagner and the anonymous referees for pointers to the literature and for comments that helped to improve the presentation.
\end{acknowledgements*}


\providecommand{\bysame}{\leavevmode\hbox to3em{\hrulefill}\thinspace}
\providecommand{\MR}{\relax\ifhmode\unskip\space\fi MR }
\providecommand{\MRhref}[2]{%
  \href{http://www.ams.org/mathscinet-getitem?mr=#1}{#2}
}
\providecommand{\href}[2]{#2}

\end{document}